\newtheorem{theorem}{Theorem}
\newtheorem{proposition}{Proposition}
\newtheorem{corollary}{Corollary}
\theoremstyle{definition}
\begin{document}

\title[Zeros of the derivatives of Hardy's function $Z(t)$]{A note on the zeros of the derivatives of Hardy's function $Z(t)$}
\author{Hung M. Bui and R. R. Hall}
\subjclass[2010]{11M06, 11M26.}
\keywords{Riemann zeta-function, Hardy's $Z$-function, zero spacing, large gaps, small gaps, Wirtinger's inequality, moments.}
\address{Department of Mathematics, University of Manchester, Manchester M13 9PL, UK}
\email{hung.bui@manchester.ac.uk}
\address{Department of Mathematics, University of York, York YO10 5DD, UK}
\email{richardroxbyhall@gmail.com}

\begin{abstract}
Using the twisted fourth moment of the Riemann zeta-function we study large gaps between consecutive zeros of the derivatives of Hardy's function $Z(t)$, improving upon previous results of Conrey and Ghosh [J. London Math. Soc. 32 (1985), 193--202], and of the second named author [Acta Arith. 111 (2004), 125--140]. We also exhibit small distances between the zeros of $Z(t)$ and the zeros of $Z^{(2k)}(t)$ for every $k\in\mathbb{N}$, in support of our numerical observation that the zeros of $Z^{(k)}(t)$ and $Z^{(\ell)}(t)$, when $k$ and $\ell$ have the same parity, seem to come in pairs which are very close to each other. The latter result is obtained using the mollified discrete second moment of the Riemann zeta-function.
\end{abstract}

\allowdisplaybreaks

\maketitle

\section{Introduction}

Hardy's function $Z(t)$ is defined by the equation
\begin{align*}
Z(t)&:=e^{i\theta(t)}\zeta(\tfrac12+it)\\
&=\bigg(\pi^{-it}\frac{\Gamma(\frac14+\frac{it}{2})}{\Gamma(\frac14-\frac{it}{2})}\bigg)^{1/2}\zeta(\tfrac12+it).
\end{align*}
Let $Z^{(k)}(t)$ be the $k$-th derivative of $Z(t)$ and $S^{(k)}=\{t_n^{(k)}\}_{n\in\mathbb{N}}$ denote the non-decreasing sequence of non-negative zeros of $Z^{(k)}(t)$, counted according to multiplicity. Let
\[
\Lambda^{(k)}:=\limsup_{n\rightarrow\infty}\frac{t_{n+1}^{(k)}-t_n^{(k)}}{2\pi/\log t_n^{(k)}}.
\]
Note that the gap between consecutive zeros $t_n^{(k)}$ and $t_{n+1}^{(k)}$ is normalised by the factor $\log t_n^{(k)}/2\pi$ so that the average spacing is $1$ (if we assume the Riemann Hypothesis (RH)). For all $k$ we have
\[
\Lambda^{(k)} \geq A(k): = \sqrt{\frac{2k+3}{2k+1}}.
\]
This follows from Wirtinger's Inequality (as stated in Theorem \ref{Wi} below) together with [\textbf{\ref{H2}}; Theorem 3]. In the case $k=1$, Conrey and Ghosh [\textbf{\ref{CG}}] showed that $\Lambda^{(1)}>1.4$ conditional on RH, and the second named author obtained $\Lambda^{(1)}>1.5462$ unconditionally [\textbf{\ref{H}}; Theorem 4]. 
We improve these results in the following theorem.

\begin{theorem}\label{thm1}
We have
\[
\Lambda^{(1)}>1.9.
\]
\end{theorem}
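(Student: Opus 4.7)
My plan is to combine Wirtinger's inequality with asymptotic evaluations of twisted moments of the Riemann zeta-function. Suppose for contradiction that $\Lambda^{(1)}\leq\Lambda$ for some $\Lambda<1.9$, and let $A(t)=\sum_{n\leq y}a_n n^{-it}$ be a Dirichlet polynomial mollifier of length $y=T^\vartheta$. The product $f(t)=Z'(t)A(t)$ vanishes at each zero $t_n^{(1)}$ of $Z'$, so applying Wirtinger's inequality on every interval $[t_n^{(1)},t_{n+1}^{(1)}]\subset[T,2T]$ and summing yields
\begin{equation*}
\int_T^{2T}\bigl|Z'(t)A(t)\bigr|^2\,dt\leq\frac{4\Lambda^2}{\log^2T}\int_T^{2T}\bigl|Z''(t)A(t)+Z'(t)A'(t)\bigr|^2\,dt\cdot(1+o(1)).
\end{equation*}
Writing $N_A,D_A$ for the two integrals, it is enough to produce $A$ for which $\log^2T\cdot N_A/D_A$ asymptotically exceeds $4\cdot(1.9)^2=14.44$.

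To evaluate $N_A$ and $D_A$, I would use the identities $Z'(t)=-\mathrm{Im}(e^{i\theta(t)}\zeta'(\tfrac12+it))$ and $Z''(t)=(\theta'(t))^2Z(t)-\mathrm{Re}(e^{i\theta(t)}\zeta''(\tfrac12+it))$ to rewrite each integrand as a linear combination of products $\zeta^{(i)}(\tfrac12+it)\overline{\zeta^{(j)}(\tfrac12+it)}B(t)\overline{C(t)}$ with $i,j\in\{0,1,2\}$ and $B,C\in\{A,A'\}$, together with oscillatory pieces carrying $e^{\pm2i\theta(t)}$ that contribute only lower-order terms because $\theta'(t)\asymp\log t$ has no stationary point on $[T,2T]$. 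The diagonal terms are then read off by differentiating in $\alpha,\beta$ the shifted twisted moment
\begin{equation*}
I(\alpha,\beta):=\int_T^{2T}\zeta(\tfrac12+\alpha+it)\,\zeta(\tfrac12+\beta-it)\,A(t)\overline{A(t)}\,dt
\end{equation*}
and its variants with $A$ replaced by $A'$. The role of the twisted fourth moment of $\zeta$ is to furnish the asymptotic of $I(\alpha,\beta)$ for $\vartheta$ beyond the classical second-moment threshold $1/2$, and this additional mollifier length is the source of the quantitative improvement over Hall's earlier bound of $1.5462$. The upshot is that $N_A$ and $D_A$ become explicit quadratic forms in $(a_n)$ of sizes $T\log^3T$ and $T\log^5T$ respectively.

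Finally I would parametrize $a_n=\mu(n)P(\log(y/n)/\log y)$ for a polynomial $P$ of fixed degree $K$, reducing the extremization of $\log^2T\cdot N_A/D_A$ to a generalized eigenvalue problem of dimension $K+1$; this is solved numerically, varying $\vartheta$ up to its admissible limit and $K$ moderately large. The main obstacle will be the computation of $D_A$ itself: the integrand $|Z''A+Z'A'|^2$ expands into several distinct twisted-moment contributions with shifts and with products of two Dirichlet polynomial twists, and every sub-leading main term at orders up to $\log^5T$ must be tracked since the final bound is sensitive to them. Ensuring uniform error control as $\vartheta$ approaches the limit of validity of the twisted fourth-moment formula is the technical heart of the argument.
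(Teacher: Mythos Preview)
Your setup has a structural gap that prevents it from reaching $1.9$, and it rests on a misreading of what the twisted fourth moment supplies.

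First, with $f(t)=Z'(t)A(t)$ the quantity $|f|^2=|Z'|^2|A|^2$ involves only \emph{two} zeta factors, so $N_A$ and $D_A$ are twisted \emph{second} moments of $\zeta$. The twisted fourth moment of \textbf{[BBLR]} evaluates $\int|\zeta|^4|A|^2$ for $A$ of length $y<T^{1/4}$; it does \emph{not} extend the asymptotic for $\int|\zeta|^2|A|^2$ to mollifiers longer than $T^{1/2}$. So your stated mechanism for improvement---pushing $\vartheta$ past $1/2$ via the fourth moment---is simply not available. The untwisted second-moment Wirtinger argument gives only $\Lambda^{(1)}\geq A(1)=\sqrt{5/3}\approx 1.29$, and a mollified second moment of this shape will not close the gap to $1.9$.

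The paper's function is instead
\[
f_1(t;\lambda,v,\eta,A)=e^{2iv\theta(t)}\,Z'(t)\,Z'\!\Big(t+\frac{\pi\lambda}{\log T}\Big)\,A\!\Big(\tfrac12+it+\tfrac{2\pi i\eta}{\log T}\Big),
\]
with amplifier $A(s)=\sum_{n\leq y_1}P(\tfrac{\log y_1/n}{\log y_1})n^{-s}$ (no $\mu(n)$), $y_1=T^{\vartheta_1}$, $\vartheta_1<1/4$. Two features are essential and absent from your proposal. The shifted product $Z'(t)Z'(t+\pi\lambda/\log T)$ forces a zero in every sub-interval of length $\pi\lambda/\log T$ (not $2\pi\lambda/\log T$), so Wirtinger yields the factor $\lambda^2/(\log T)^2$ rather than $4\lambda^2/(\log T)^2$; and $|f_1|^2$ now carries \emph{four} zeta factors, so the relevant input is genuinely the twisted fourth moment $I_{\underline\alpha,\underline\beta}(A)$. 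The additional free parameters $v$ and $\eta$ are then optimised alongside $P$. With $\vartheta_1=0.2499$, $P(x)=1-2.5x$, $v=0.2$, $\eta=0.5$ one obtains $h_1(1.9,0.2,0.5,A)>1$, which is what gives $\Lambda^{(1)}>1.9$.
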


Beside the fourth moments of $Z(t)$ and its derivative, Theorem 4 of [\textbf{\ref{H}}] relies on a Wirtinger type inequality of the form
\begin{equation*}
	\frac{3}{8}\big(1+4\nu+\sqrt{1+8\nu}\big)\int_a^b|f(t)|^4dt\leq \Big(\frac{b-a}{\pi}\Big)^4	\int_{a}^{b}|f'(t)|^4dt+6\nu\Big(\frac{b-a}{\pi}\Big)^2	\int_{a}^{b}|f(t)f'(t)|^2dt
\end{equation*}
for any $f\in C^2[a,b]$ satisfying $f(a)=f(b)=0$ and for any $\nu\geq0$ [\textbf{\ref{H1}}; Theorem 2]. The details of this proof are contained in [\textbf{\ref{H}}; pp. 138--139]. It is easy to see how this may be applied to higher derivatives, given that the corresponding moments (and mixed moments) may be computed, and we thereby obtain
\[
\Lambda^{(k)} > B(k)
\]
with $B(k)$ given below. 
\vspace{2mm}

\begin{table}[h!]
	\centering
	\begin{tabular}{ |c|c||c|c||c|c|}
		\hline
		$k$ & $B(k)$ &  $k$ & $B(k)$ & $k$ & $B(k)$ \\
		\hline\hline
		$1$ & $1.5462$ & $6$ & $1.1479$ & $14$ & $1.0677$ \\[3pt]
		$2$ & $1.3609$ & $7$ & $1.1288$ & $18$ & $1.0533$ \\[3pt]
		$3$ & $1.2653$ & $8$ & $1.1141$ & $22$ & $1.0439$ \\[3pt]
		$4$ & $1.2099$ & $9$ & $1.1024$ & $26$ & $1.0373$ \\[3pt]
		$5$ & $1.1735$ & $10$ & $1.0929$ & $30$ & $1.0325$ \\[3pt]
		\hline
	\end{tabular}
	\caption{$B(k)$ for $1\leq k\leq 30$.}
\end{table}

We see that 
\[
A(k)=1+\frac{1}{2k}+o(k^{-1})
\]
and almost surely, albeit we do not have a proof,
\[
B(k)=1+\frac{1}{k}+o(k^{-1}).
\]
These estimates are improved in the following result.

\begin{theorem}\label{thm2}
	For $2\leq k\leq 10$ we have
	\[
	\lambda^{(k)} > C(k)
	\]
	with $C(k)$ given in Table 2 below.
	\vspace{2mm}
\begin{table}[h!]
	\centering
	\begin{tabular}{ |c|c||c|c||c|c|}
		\hline
		$k$ & $C(k)$ &  $k$ & $C(k)$ & $k$ & $C(k)$ \\
		\hline\hline
		$2$ & $1.606$ & $5$ & $1.306$ & $8$ & $1.205$ \\[3pt]
		$3$ & $1.451$ & $6$ & $1.265$ & $9$ & $1.184$ \\[3pt]
		$4$ & $1.365$ & $7$ & $1.231$ & $10$ & $1.167$ \\[3pt]
		\hline
	\end{tabular}
	\caption{$C(k)$ for $2\leq k\leq 10$.}
\end{table}	
\end{theorem}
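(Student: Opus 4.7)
The plan is to extend the method of Theorem~\ref{thm1} from $k=1$ to $2 \le k \le 10$, the key ingredient again being the twisted fourth moment of the Riemann zeta-function and its derivatives. First, I would apply the Wirtinger-type inequality [H1; Theorem 2] quoted in the introduction to the function $f(t) = Z^{(k)}(t) B(t)$ on each interval $[a, b]$ between consecutive zeros of $Z^{(k)}$, where $B(t) = \sum_{n \le N} b(n) n^{-it}$ is a Dirichlet polynomial mollifier of length $N = T^{\theta}$ for some admissible $\theta < 1/2$. Since $Z^{(k)}(a) = Z^{(k)}(b) = 0$, the vanishing condition $f(a) = f(b) = 0$ is automatic and the inequality applies.

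Summing over the consecutive zeros in a range $[T, 2T]$ and bounding each gap by $b - a \le (1 + o(1))\, 2\pi\Lambda^{(k)}/\log T$, we obtain
\begin{equation*}
\tfrac{3}{8}\bigl(1 + 4\nu + \sqrt{1+8\nu}\bigr)\, I_0 \;\le\; \bigl(\Lambda^{(k)}\bigr)^4 I_2 \;+\; 6\nu\, \bigl(\Lambda^{(k)}\bigr)^2 I_1 \;+\; o(1),
\end{equation*}
where, after normalization by the appropriate powers of $2\pi/\log T$, the quantities $I_0, I_1, I_2$ are the averages over $[T, 2T]$ of $|Z^{(k)}B|^4$, $|Z^{(k)}B \cdot (Z^{(k)}B)'|^2$, and $|(Z^{(k)}B)'|^4$. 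Expanding $(Z^{(k)}B)'$ via the product rule converts these into finite combinations of mean values of $|\zeta^{(j)}(\tfrac12+it)B(t)|^2 |\zeta^{(\ell)}(\tfrac12+it)B(t)|^2$ for $j, \ell \in \{k, k+1\}$, together with error terms coming from derivatives of the $\theta$-factor that are negligible on the scale of interest.

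The evaluation of each such twisted fourth moment proceeds from the general shifted twisted fourth moment of $\zeta$, obtained by the methods developed for Theorem~\ref{thm1}, by taking appropriate derivatives in the spectral shifts and then specializing to zero. The leading term in each case is a quadratic form in the coefficients $b(n)$. Parametrizing $b(n) = \mu(n) P(\log(N/n)/\log N)$ for a polynomial $P$ of bounded degree, the minimization of the right-hand side of the Wirtinger bound over $\nu \ge 0$ and over $P$ becomes a finite-dimensional generalized eigenvalue problem that can be solved numerically. Performing this optimization separately for each $k \in \{2, 3, \ldots, 10\}$ then produces the constants $C(k)$ recorded in Table~2.

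The main technical obstacle is the explicit evaluation of the twisted fourth moment with differentiations of total order up to $2k+2$, and in particular the book-keeping required to track the coefficients of $B$ through successive differentiations in the shifts and to express the main term as an \emph{explicit} quadratic form whose entries can be fed into the numerical optimizer. Once the main term is in hand with sufficient uniformity in $\theta < 1/2$, the remaining optimization is routine, and the fact that $C(k) > B(k)$ for every $k$ in the tabulated range reflects precisely the extra flexibility gained by the mollifier $B$ relative to the unmollified method used to derive $B(k)$.
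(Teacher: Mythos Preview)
Your proposal takes a different route from the paper and also contains a technical gap.

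\textbf{What the paper does.} The paper does \emph{not} use the quartic Wirtinger inequality from [\textbf{H1}; Theorem~2]; that inequality is what produces the weaker bounds $B(k)$. For Theorem~\ref{thm2} the paper instead follows the method of Hall~[\textbf{H3}] and Bui--Milinovich~[\textbf{BM}]: one applies the ordinary $L^2$ Wirtinger inequality (Theorem~\ref{Wi}) to the function
\[
f_k(t;\lambda,v,\eta,A)=e^{2iv\theta(t)}\,Z^{(k)}(t)\,Z^{(k)}\!\Big(t+\frac{\pi\lambda}{\log T}\Big)\,A\!\Big(\tfrac12+it+\frac{2\pi i\eta}{\log T}\Big),
\]
whose zeros include both the zeros of $Z^{(k)}$ and their translates by $\pi\lambda/\log T$. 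The amplifier is $A(s,P)=\sum_{n\le y_1}P\!\bigl(\tfrac{\log y_1/n}{\log y_1}\bigr)n^{-s}$ with $y_1=T^{\vartheta_1}$, $\vartheta_1<1/4$ (no $\mu(n)$). Since $|f_k|^2$ already contains four zeta factors, one needs only $|A|^2$, and the twisted fourth moment of~[\textbf{BBLR}] (your Theorem~\ref{main}) applies directly. The extra tunable parameters $v,\eta$ and the shift $\lambda$ in $Z^{(k)}$ are essential to the numerics; the tabulated $C(k)$ arise from specific choices $\vartheta_1=0.2499$, $v=0.2$, $\eta=0.5$, and linear $P(x)=1-cx$.

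\textbf{Gap in your approach.} Your function $f=Z^{(k)}B$ leads to integrals of $|Z^{(k)}B|^4$ and $|(Z^{(k)}B)'|^4$, i.e.\ fourth moments of $\zeta$ twisted by $|B|^4=|B^2|^2$. The length of $B^2$ is $T^{2\theta}$, so the admissible range from~[\textbf{BBLR}] forces $2\theta<1/4$, not $\theta<1/2$; your stated range is off by a factor of four and the available mollifier is correspondingly much shorter than you assume. More importantly, there is no reason the optimisation over $\nu\ge 0$ and $P$ in your framework reproduces the specific constants $C(k)$ of Table~2, since those numbers are outputs of a different functional (namely $c_{k,0}/(\lambda^2 c_{k,1})$ from Proposition~\ref{main1}) evaluated at the paper's particular parameter choices.
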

\newpage
We may compare these results as follows.
\vspace{2mm}

\begin{table}[h!]
	\centering
	\begin{tabular}{ |c|c|c||c|c|c||c|c|}
		\hline
		$k$ & $\frac{B(k)-1}{A(k)-1}$ & $\frac{C(k)-1}{A(k)-1}$ & $k$ & $\frac{B(k)-1}{A(k)-1}$ & $\frac{C(k)-1}{A(k)-1}$ & $k$ & $\frac{B(k)-1}{A(k)-1}$ \\
		\hline\hline
		$1$ & $1.8773$ & $3.0928$ & $6$ & $1.9944$ & $3.5727$ & $14$ & $1.9988$ \\[3pt]
		$2$ & $1.9702$ & $3.3075$ & $7$ & $1.9958$ & $3.5768$ & $18$ & $1.9993$ \\[3pt]
		$3$ & $1.9818$ & $3.3683$ & $8$ & $1.9967$ & $3.5846$ & $22$ & $1.9995$ \\[3pt]
		$4$ & $1.9888$ & $3.4583$ & $9$ & $1.9973$ & $3.5856$ & $26$ & $1.9996$ \\[3pt]
		$5$ & $1.9924$ & $3.5126$ & $10$ & $1.9978$ & $3.5886$ & $30$ & $1.9997$ \\[3pt]
		\hline
	\end{tabular}
	\caption{$\frac{B(k)-1}{A(k)-1}$ for $1\leq k\leq 30$ and $\frac{C(k)-1}{A(k)-1}$ for $1\leq k\leq 10$.}
\end{table}

It appears from Table 3 that not only
	\[
	\lim_{k\rightarrow\infty}\frac{B(k)-1}{A(k)-1}=2
	\]
but that the convergence is rapid. It would be desirable to have a proof of this result. The value for $C(k)$ is more complicated. Its treatment involves not only the shifted variable in $Z^{(k)}$ but also the introduction of the amplifier $A(s,P)$ (as in Section 2.1 below). The amplifier involves the polynomial $P$ and the parameter $y_1=T^{\vartheta_1}$ in which $\vartheta_1<1/4$. It is possible that this last condition might be relaxed, leading to a better value for $C(k)$, and, ultimately, a value for 
\[
	\lim_{k\rightarrow\infty}\frac{C(k)-1}{A(k)-1}.
	\]

As remarked in [\textbf{\ref{BH}}], when $k$ and $\ell$ have the same parity, the zeros of $Z^{(k)}(t)$ and $Z^{(\ell)}(t)$ seem to come in pairs which are very close to each other. See the below graphs for the zeros of $Z(t)$ and $Z''(t)$, and the zeros of $Z^{(4)}(t)$ and $Z^{(6)}(t)$.

\vspace{0.3cm}

\begin{center}
\includegraphics[width=0.85\textwidth]{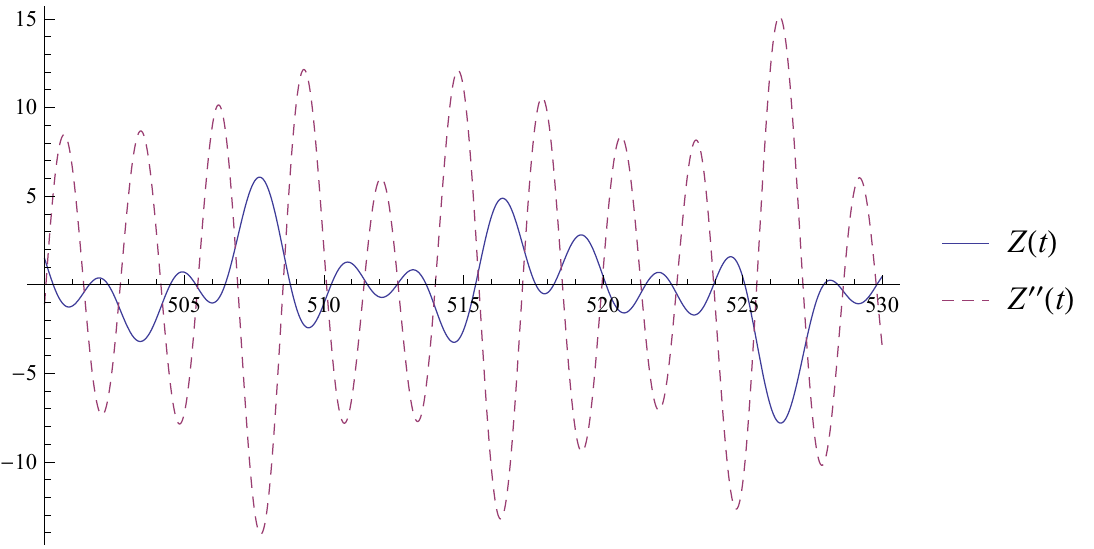}

\vspace{0.2cm}
\textsc{Figure 1}. $Z(t)$ and $Z''(t)$ for $t\in[500,530]$.
\end{center}
\vspace{0.3cm}

\begin{center}
\includegraphics[width=0.85\textwidth]{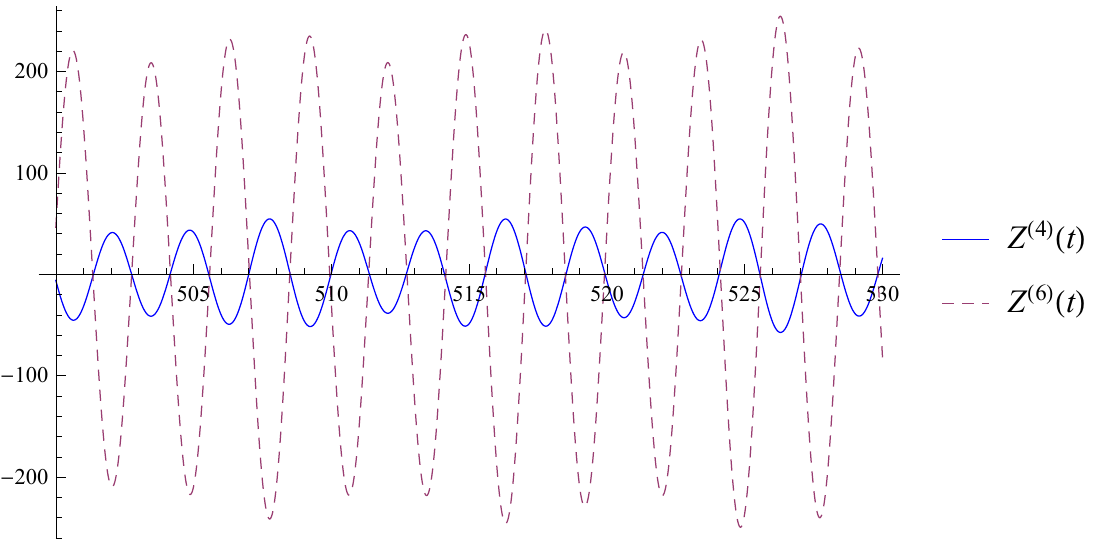}

\vspace{0.2cm}
\textsc{Figure 2}. $Z^{(4)}(t)$ and $Z^{(6)}(t)$ for $t\in[500,530]$.
\end{center}
\vspace{0.3cm}

For $x\in\mathbb{R}$ and a set $S\subset\mathbb{R}$ we define
\[
\text{dist}(x,S):=\inf_{s\in S}|x-s|.
\]
Let
\[
\mu^{(k)}:=\liminf_{n\rightarrow\infty}\frac{\text{dist}(t_n,S^{(k)})}{2\pi/\log t_n}.
\]
Our next theorem shows that there exist infinitely many zeros of $Z(t)$ whose distances to the set of zeros of $Z^{(2k)}(t)$ are small.

\begin{theorem}\label{thmsmall}
Assuming RH, then we have
\[
\mu^{(k)}<\mu_k
\]
with
\begin{equation}\label{muvalues}
\mu_1=0.2,\qquad\mu_2=0.22\qquad\emph{and}\qquad \mu_3=0.23.
\end{equation}
\end{theorem}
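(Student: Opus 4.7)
The plan is to produce a sign change of $Z^{(2k)}$ in a small neighbourhood of infinitely many zeros of $Z$. Set $L=\log(T/2\pi)$ and $h=2\pi\mu/L$ with $\mu>0$ a parameter to optimise. Assuming RH, the sequence $\{t_n\}$ is precisely the set of ordinates of non-trivial zeros of $\zeta$, and if
\[
Z^{(2k)}(t_n+h)\,Z^{(2k)}(t_n-h)<0
\]
then by continuity $Z^{(2k)}$ has a zero in $(t_n-h,t_n+h)$, giving $\text{dist}(t_n,S^{(2k)})<h$. Applying this on dyadic blocks $(T,2T]$ as $T\to\infty$ produces infinitely many such $t_n$. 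A sufficient condition for at least one to exist in each block is that the weighted product
\begin{equation*}
\mathcal{S}_k(\mu,T):=\sum_{T<t_n\leq 2T}Z^{(2k)}(t_n+h)\,Z^{(2k)}(t_n-h)\,\bigl|M(\tfrac12+it_n)\bigr|^2
\end{equation*}
be negative, where $M(s)=\sum_{n\leq y}\mu(n)P(\log(y/n)/\log y)n^{-s}$ is the standard mollifier for $1/\zeta(s)$, $y=T^{\vartheta}$, $\vartheta<1/2$, and $P$ a polynomial with $P(0)=0$.

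Leibniz's rule applied to $Z=e^{i\theta}\zeta$ for the first factor, and to its conjugate representation $Z=e^{-i\theta}\overline{\zeta(\tfrac12+it)}$ (via the functional equation) for the second, expresses $Z^{(2k)}(t_n+h)\,Z^{(2k)}(t_n-h)$ as an explicit linear combination, with coefficients polynomial in $\theta^{(m)}(t_n\pm h)$, of quantities $\zeta^{(j_1)}(\tfrac12+it_n+ih)\,\overline{\zeta^{(j_2)}(\tfrac12+it_n-ih)}$ for $0\leq j_1,j_2\leq 2k$. All such quantities are read off by differentiating in shift parameters $\alpha,\beta$ the shifted mollified discrete second moment
\begin{equation*}
J(\alpha,\beta;T):=\sum_{T<\gamma\leq 2T}\zeta(\tfrac12+i\gamma+\alpha)\,\overline{\zeta(\tfrac12+i\gamma+\beta)}\,\bigl|M(\tfrac12+i\gamma)\bigr|^2
\end{equation*}
and specialising $\alpha=ih$, $\beta=-ih$. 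The asymptotic for $J(\alpha,\beta;T)$ is accessible in the range $\vartheta<1/2$ via Gonek's uniform version of Landau's formula, which converts the sum over zeros into a contour integral involving $\zeta'/\zeta$ and the standard twisted mollified second moment of $\zeta$; the outcome is a main term that is an explicit polynomial in $L$, $L\alpha$, and $L\beta$ whose coefficients are linear functionals of $P$.

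Assembling everything, $T^{-1}\mathcal{S}_k(\mu,T)$ becomes a concrete continuous function $\Phi_k(\mu;P,\vartheta)$, and the values in (\ref{muvalues}) should follow by numerical optimisation: for each $k\in\{1,2,3\}$, one selects $(P,\vartheta)$ to minimise the smallest $\mu$ for which $\Phi_k(\mu;P,\vartheta)<0$. The main obstacle is bookkeeping the main term with sufficient precision. At $\mu=0$ one has $\mathcal{S}_k(0,T)=\sum_{n} Z^{(2k)}(t_n)^2\bigl|M(\tfrac12+it_n)\bigr|^2>0$, so negativity for $\mu>0$ can only emerge from delicate cancellations among polynomial-in-$L$ terms of degree roughly $4k$. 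Tracking these terms to the necessary depth, incorporating all off-diagonal contributions created by the twist $|M|^2$, and then carrying out the multi-parameter numerical optimisation is where the bulk of the work (and the present restriction to $k\leq 3$) lies.
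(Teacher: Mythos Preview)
Your proposal is correct and follows essentially the same route as the paper: reduce to showing the mollified discrete sum $\Sigma_k(\mu,M)$ is negative, express $Z^{(2k)}(\gamma\pm h)$ via the differential operator $Q(\partial/\partial\alpha)=(\tfrac12+\partial/\partial\alpha)^{2k}$ acting on shifted $\zeta$-values, and appeal to the known asymptotic for the shifted mollified discrete second moment (Conrey--Ghosh--Gonek, Bui--Heath-Brown, Heap--Li--Zhao) to obtain an explicit function of $\mu$ which is then checked numerically. The only practical difference is that the paper invokes the ready-made formula (Theorem~\ref{mainthm2}) rather than rederiving it via Gonek--Landau, and simply exhibits the working choices $\vartheta=0.499$, $P(x)=x^2$ instead of optimising.
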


Our method does not produce a positive proportion of distances. However, we can show that there are $\gg_\varepsilon T^{1-\varepsilon}$ such small distances.

\begin{corollary}\label{coro}
Let $T$ be large. Assuming RH, then for $1\leq k\leq 3$ and any $C>\log 4$, there are
\[
\gg T\exp\Big(-C\frac{\log T}{\log\log T}\Big)
\]
ordinates $t_n\in[T,2T]$ whose distances to the set of zeros of $Z^{(2k)}(t)$ are $<\frac{2\pi}{\log T}\mu_k$ with $\mu_k$ given in \eqref{muvalues}.
\end{corollary}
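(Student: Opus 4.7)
I would deduce Corollary \ref{coro} from the proof of Theorem \ref{thmsmall} by combining a Cauchy--Schwarz step with a pointwise RH-conditional bound on the mollifier. Let $E_T\subset[T,2T]$ denote the set of ordinates $t_n$ with $\text{dist}(t_n,S^{(2k)})<\frac{2\pi}{\log T}\mu_k$, and let $M(s)$ be the Dirichlet polynomial mollifier (of length $y=T^{\vartheta}$ for some $\vartheta<\tfrac12$) used in the proof of Theorem \ref{thmsmall}. That proof rests on a positivity inequality obtained from comparing two mollified discrete second moments over $[T,2T]$; unpacking it yields more than mere existence of a good ordinate. Namely, the contribution to the mollified second moment from the ``bad'' ordinates (those not in $E_T$) is at most a fixed fraction $1-\delta$ of the total, for some $\delta=\delta(k)>0$, and since $\sum_{t_n\in[T,2T]}|M(\tfrac12+it_n)|^2$ is of order $T(\log T)^{O(1)}$ by the very computation performed in that proof, this gives
\[
\sum_{t_n\in E_T}|M(\tfrac12+it_n)|^2\,\gg\,T(\log T)^{O(1)}.
\]

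The Cauchy--Schwarz inequality, together with the trivial bound $\sum_{t_n}|M|^4\le\max_t|M|^2\cdot\sum_{t_n}|M|^2$, then yields
\[
\#E_T\,\ge\,\frac{\bigl(\sum_{t_n\in E_T}|M(\tfrac12+it_n)|^2\bigr)^2}{\sum_{t_n}|M(\tfrac12+it_n)|^4}\,\gg\,\frac{T(\log T)^{O(1)}}{\max_{t\in[T,2T]}|M(\tfrac12+it)|^2},
\]
and the problem reduces to a pointwise upper bound for $|M|$ on the critical line. Since $M$ is a truncation of $1/\zeta$, I would represent $M(\tfrac12+it)$ by a short Perron-type contour integral of $\zeta^{-1}$ and shift the contour slightly to the right of $\text{Re}\,s=\tfrac12$, where the RH-conditional estimate $|\zeta(\sigma+it)^{-1}|\ll\exp\!\bigl((\log2+o(1))(\log t)/(\log\log t)\bigr)$ (Soundararajan) is available. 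This transfers to $M$, giving
\[
|M(\tfrac12+it)|\,\ll\,\exp\!\left((\log2+o(1))\frac{\log T}{\log\log T}\right)
\]
uniformly on $[T,2T]$, and hence $|M(\tfrac12+it_n)|^2\ll\exp\!\bigl((\log4+o(1))(\log T)/(\log\log T)\bigr)$. Substituting this into the Cauchy--Schwarz bound, and absorbing the polylogarithmic factor into the exponent, produces $\#E_T\gg T\exp\!\bigl(-C(\log T)/(\log\log T)\bigr)$ for any $C>\log4$, which is exactly the statement of Corollary \ref{coro}.

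The main obstacle is the RH-conditional pointwise bound on $|M|$ at the critical line. The trivial estimate $|M(\tfrac12+it)|\ll\sqrt y=T^{\vartheta/2}$ is a polynomial factor too weak, so one genuinely needs Soundararajan's bound on $\zeta^{-1}$. The contour shift itself also needs some care, since on the critical line $\zeta^{-1}$ has poles precisely at the $t_n$ one wants to bound; a shift of distance $\asymp1/\log T$ into the half-plane $\text{Re}\,s>\tfrac12$, combined with the Perron smoothing inherent in the definition of $M$, suffices and contributes only an $o(1)$ loss in the exponent.
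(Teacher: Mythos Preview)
Your first step misreads the proof of Theorem~\ref{thmsmall}. That proof does not compare two mollified second moments and extract a ``fraction $1-\delta$'' inequality; it evaluates the single sum
\[
\Sigma_k(\mu,M)=\sum_{T<\gamma\le 2T}Z^{(2k)}\Big(\gamma+\tfrac{2\pi\mu}{\log T}\Big)Z^{(2k)}\Big(-\gamma+\tfrac{2\pi\mu}{\log T}\Big)\Big|M\big(\tfrac12+i\gamma\big)\Big|^2
\]
and shows it is negative of size $\gg T(\log T)^{4k+1}$. Since terms with $t_n\notin E_T$ are nonnegative, what one actually obtains is
\[
\sum_{t_n\in E_T}\Big|Z^{(2k)}\big(t_n+\tfrac{2\pi\mu}{\log T}\big)Z^{(2k)}\big(t_n-\tfrac{2\pi\mu}{\log T}\big)\Big|\,\big|M(\tfrac12+it_n)\big|^2\ \gg\ T(\log T)^{4k+1},
\]
with the $Z^{(2k)}$ factors still present. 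There is no way to strip them off and deduce $\sum_{t_n\in E_T}|M|^2\gg T(\log T)^{O(1)}$ without a pointwise upper bound on $Z^{(2k)}$, which you never invoke. So your starting inequality is unjustified.

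The paper's argument applies Cauchy--Schwarz directly to the displayed sum above, giving
\[
T^2(\log T)^{8k+2}\ \ll\ (\#E_T)\cdot\sum_{T<\gamma\le 2T}\big|Z^{(2k)}\big|^2\big|Z^{(2k)}\big|^2\,|M|^4,
\]
and then bounds the second factor by combining the RH-conditional Chandee--Soundararajan estimate $|\zeta(\tfrac12+\alpha+it)|\ll\exp\bigl((\tfrac{\log 2}{2}+o(1))\log T/\log\log T\bigr)$ (hence the same for $Z^{(2k)}$ up to powers of $\log T$) with the average bound $\sum_{T<\gamma\le 2T}|M(\tfrac12+i\gamma)|^4\ll T(\log T)^5$ from~[\textbf{\ref{BGMM}}]. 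Thus the large pointwise factor one must control is $|\zeta|$, not $|M|$; your proposed Perron/contour-shift bound on $M$ via $1/\zeta$ is both unnecessary and more delicate (precisely because $1/\zeta$ has poles on the critical line), and in any case would not repair the gap in the first step.
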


Numerical data seems to show that the distance between $t_n^{(k)}$ and the set of zeros of $Z^{(\ell)}(t)$ is small for almost all $n$ for $k,\ell>0$ with the same parity (see, for instance, Figure 2). It would be interesting if anything can be said about the size of
\[
\mu^{(k,\ell)}:=\liminf_{n\rightarrow\infty}\frac{\text{dist}(t_n^{(k)},S^{(\ell)})}{2\pi/\log t_n^{(k)}}
\]
when $k,\ell>0$ are both odd or both even.

\section{Reduction to mean values of the Riemann zeta-function}

\subsection{Large gaps}

For large gap results, we shall follow the ideas in [\textbf{\ref{H3}}] and [\textbf{\ref{BM}}].

We use the following version of Wirtinger's Inequality, which can be found in Theorem
257 of Hardy, Littlewood and P\'olya [\textbf{\ref{HLP}}].

\begin{theorem}\label{Wi}
If $f\in C^2[a,b]$ and $f(a)=f(b)=0$, then we have
\begin{equation*}
\int_{a}^{b}|f(t)|^2dt\leq\Big(\frac{b-a}{\pi}\Big)^2\int_{a}^{b}|f'(t)|^2dt.
\end{equation*}
\end{theorem}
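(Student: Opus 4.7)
The plan is to prove this classical Wirtinger inequality via a Fourier sine series expansion after normalising the interval. First I would reduce to the case $[a,b]=[0,\pi]$ by the linear change of variable $u=\pi(t-a)/(b-a)$, setting $g(u):=f(a+(b-a)u/\pi)$. Then $g\in C^2[0,\pi]$ with $g(0)=g(\pi)=0$, and the chain rule gives $\int_0^\pi |g(u)|^2\,du=\frac{\pi}{b-a}\int_a^b|f(t)|^2\,dt$ together with $\int_0^\pi|g'(u)|^2\,du=\frac{b-a}{\pi}\int_a^b|f'(t)|^2\,dt$. Hence the inequality is equivalent to showing
\[
\int_0^\pi |g(u)|^2\,du\leq \int_0^\pi |g'(u)|^2\,du
\]
for every $g\in C^2[0,\pi]$ vanishing at the endpoints.

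Next I would expand $g$ in a Fourier sine series on $[0,\pi]$. Since $g$ is continuous and vanishes at $0$ and $\pi$, its odd $2\pi$-periodic extension is continuous, and the $C^2$ regularity of $g$ ensures that the series
\[
g(u)=\sum_{n=1}^\infty b_n \sin(nu),\qquad b_n=\frac{2}{\pi}\int_0^\pi g(u)\sin(nu)\,du,
\]
converges uniformly. Differentiating (or simply integrating by parts once in the definition of the Fourier coefficients of $g'$) shows that $g'(u)$ has cosine coefficients $n b_n$.

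By Parseval's identity applied separately to $g\in L^2[0,\pi]$ and to $g'\in L^2[0,\pi]$,
\[
\int_0^\pi |g(u)|^2\,du=\frac{\pi}{2}\sum_{n=1}^\infty b_n^2,\qquad \int_0^\pi|g'(u)|^2\,du=\frac{\pi}{2}\sum_{n=1}^\infty n^2 b_n^2.
\]
Since $n^2\geq 1$ for every $n\geq 1$, the second sum dominates the first term by term, which establishes the reduced inequality and therefore the theorem after undoing the scaling.

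The main step that needs a little care is the passage from $g$ to $g'$ in the Fourier picture, i.e.\ justifying that the cosine coefficients of $g'$ are exactly $nb_n$; this is immediate from integration by parts using $g(0)=g(\pi)=0$, so there is no serious obstacle. A by-product of the argument is the characterisation of the equality case: one has equality precisely when $b_n=0$ for all $n\geq 2$, i.e.\ when $f(t)$ is a constant multiple of $\sin(\pi(t-a)/(b-a))$, which explains the sharp constant $(b-a)/\pi$ and indicates why the bound is optimal in applications such as the gap arguments of Section~2.1.
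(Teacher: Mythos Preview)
Your proof is correct and is the standard Fourier sine series argument for Wirtinger's inequality. Note that the paper does not actually prove this theorem at all; it simply quotes it as Theorem~257 of Hardy, Littlewood and P\'olya~[\textbf{\ref{HLP}}] and uses it as a black box, so there is no in-paper proof to compare against. For what it is worth, the proof in~[\textbf{\ref{HLP}}] proceeds along essentially the same lines as yours (reduction to a standard interval followed by a Parseval computation), so your proposal is fully in line with the cited source.
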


Suppose that 
\begin{equation}\label{assum}
\lambda^{(k)}\leq \lambda.
\end{equation}
Let $A(s)$ be an ``amplifier" and let
\[
f_k(t;\lambda,v,\eta,A):=e^{2iv\theta(t)}Z^{(k)}(t)Z^{(k)}\Big(t+\frac{\pi\lambda}{\log T}\Big)A\Big(\frac12+it+\frac{2\pi i \eta}{\log T}\Big),
\]
where $v,\eta\in\mathbb{R}$ are to be chosen later. Denote all the zeros of the function $f_k$ in the interval $[T,2T]$ by $v_1\leq v_2\leq\ldots\leq v_N$ . Then from \eqref{assum} we have
\[
v_{n+1}-v_n\leq \big(1+o(1)\big)\frac{\pi\lambda}{\log T}
\]
as $T\rightarrow\infty$ for $0\leq n\leq N$, where we have defined $v_0=T$ and $v_{N+1}=2T$. It follows from Theorem \ref{Wi} that
\begin{eqnarray*}
\int_{v_n}^{v_{n+1}}|f_k(t;\lambda,v,\eta,A)|^2dt\leq\big(1+o(1)\big)\frac{\lambda^2}{(\log T)^2}\int_{v_n}^{v_{n+1}}|f_k'(t;\lambda,v,\eta,A)|^2dt
\end{eqnarray*}
for $0\leq n\leq N$. Summing up we obtain
\begin{eqnarray*}
\int_{T}^{2T}|f_k(t;\lambda,v,\eta,A)|^2dt\leq\big(1+o(1)\big)\frac{\lambda^2}{(\log T)^2}\int_{T}^{2T}|f_k'(t;\lambda,v,\eta,A)|^2dt.
\end{eqnarray*}
Thus, if the ratio
\begin{equation}\label{requiredlarge}
h_k(\lambda,v,\eta,A):=\frac{(\log T)^2}{\lambda^2}\frac{\int_{T}^{2T}|f_k(t;\lambda,v,\eta,A)|^2dt}{\int_{T}^{2T}|f_k'(t;\lambda,v,\eta,A)|^2dt}>1
\end{equation}
for some $\lambda,v, \eta$ and $A$, then we get $$\lambda^{(k)}>\lambda.$$

We shall choose the amplifier of the form
\begin{equation*}
A(s):=A(s,P) = \sum_{n\leq y_1}\frac{P(\frac{\log y_1/n}{\log{y_1}})}{n^s},
\end{equation*}
where $y_1=T^{\vartheta_1}$ with $\vartheta_1<1/4$, which is admissible following the twisted fourth moment of the Riemann zeta-function in [\textbf{\ref{BBLR}}], and 
$P$ is a polynomial to be chosen later\footnote{We could also add a factor of $d_r(n)$, the coefficients of $\zeta(s)^r$, but that does not help with the numerical results so we choose $r=1$ for simplicity.}. 

\subsection{Small gaps}

For small gap results we shall modify the approach in [\textbf{\ref{BGMM}}].

We observe that if 
\begin{equation}\label{assum2}
\mu^{(2k)}> \mu >0,
\end{equation}
 then 
\[
Z^{(2k)}\Big(\gamma+\frac{2\pi \mu}{\log T}\Big)Z^{(2k)}\Big(\gamma-\frac{2\pi \mu}{\log T}\Big)  \Big|M\Big(\frac12+i\gamma\Big)\bigg|^2 \geq0
\]
for any mollifier $M$. That is because $Z^{(2k)}(\gamma+\frac{2\pi \mu}{\log T})$ and $Z^{(2k)}(\gamma-\frac{2\pi \mu}{\log T})$ are of the same sign when $T< \gamma\leq 2T$ sufficiently large by \eqref{assum2}. Therefore, as $Z^{(2k)}(t)=Z^{(2k)}(-t)$, if we can show that 
\begin{equation}\label{requiredsmall}
\Sigma_k(\mu,M) :=\sum_{T<\gamma \le 2T}Z^{(2k)}\Big(\gamma+\frac{2\pi \mu}{\log T}\Big)Z^{(2k)}\Big(-\gamma+\frac{2\pi \mu}{\log T}\Big)  \Big|M\Big(\frac12+i\gamma\Big)\bigg|^2 < 0
\end{equation}
for some $\mu$ and $M$, then it follows that $$\mu^{(2k)} \leq \mu.$$ 

The mollifier $M$ is chosen to be the usual mollifier,
\begin{equation*}
M(s):=M(s,P) = \sum_{n\leq y_2}\frac{\mu(n)P(\frac{\log y_2/n}{\log{y_2}})}{n^s},
\end{equation*}
where $y_2=T^{\vartheta_2}$ with $\vartheta_2<1/2$, which is admissible following the mollified discrete second moment of $\zeta'(\rho)$ in [\textbf{\ref{CGG1}}] and [\textbf{\ref{BH-B}}], and $P$ is a polynomial with $P(0)=0$.

\section{Proof of Theorem \ref{thm1} and Theorem \ref{thm2}}

\subsection{The mean value results}

Let
\begin{eqnarray}\label{Iab}
I_{\underline{\alpha},\underline{\beta}}(A)&=&\int_{T}^{2T}\zeta(\tfrac{1}{2}+\alpha_1+it)\zeta(\tfrac{1}{2}+\alpha_2+it)\zeta(\tfrac{1}{2}+\beta_1-it)\zeta(\tfrac{1}{2}+\beta_2-it)\\
&&\qquad\qquad\times\, A(\tfrac{1}{2}+\alpha_3+it)A(\tfrac{1}{2}+\beta_3-it)dt.\nonumber
\end{eqnarray}
We need the following result from [\textbf{\ref{BM}}] (see Lemma 3.1).

\begin{theorem}\label{main}
We have
\[
I_{\underline{\alpha},\underline{\beta}}(A)=\frac{Cc(\underline{\alpha},\underline{\beta})}{2}\,T(\log y_1)^{5}(\log T)^4+O(T(\log T)^{8})
\]
uniformly for $\alpha_j,\beta_j\ll(\log T)^{-1}$, where 
\[
C=\prod_{p}\bigg(\Big(1-\frac1p\Big)^{9}\sum_{j\geq0}\frac{d_3(p^j)^2}{p^j}\bigg)
\]
and $c(\underline{\alpha},\underline{\beta})$ is given by
\begin{align}\label{ca}
&\mathop{\int}_{\substack{[0,1]^9\\x+x_1+x_2\leq1\\x+x_3+x_4\leq 1}}y_1^{-(\alpha_3+\beta_3)x-\alpha_3( x_1+ x_2)-\beta_3(x_3+x_4)-\beta_1 x_1-\beta_2 x_2-\alpha_1x_3-\alpha_2x_4-(\alpha_2-\alpha_1)(x_3-x_4)t_3-(\beta_2-\beta_1)(x_1-x_2)t_4}\nonumber\\
&\quad\times(Ty_1^{-x_1-x_3})^{-(\alpha_1+\beta_1)t_1-(\alpha_2-\alpha_1)t_1t_3-(\beta_2-\beta_1)t_1t_4}(Ty_1^{-x_2-x_4})^{-(\alpha_2+\beta_2)t_2+(\alpha_2-\alpha_1)t_2t_3+(\beta_2-\beta_1)t_2t_4}\nonumber\\
&\quad\times \big(1-\vartheta_1(x_1+x_3)\big)\big(1-\vartheta_1(x_2+x_4)\big)\nonumber\\
&\quad\times\Big(\vartheta_1(x_1-x_2)+\big(1-\vartheta_1(x_1+x_3)\big)t_1-\big(1-\vartheta_1(x_2+x_4)\big)t_2\Big)\nonumber\\
&\quad\times\Big(\vartheta_1(x_3-x_4)+\big(1-\vartheta_1(x_1+x_3)\big)t_1-\big(1-\vartheta_1(x_2+x_4)\big)t_2\Big) \nonumber\\
&\quad\times P(1-x-x_1-x_2)P(1-x-x_3-x_4)dx_1dx_2dx_3dx_4dxdt_1dt_2dt_3dt_4.
\end{align}

\end{theorem}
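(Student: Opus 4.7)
\emph{Proof plan.} Since Theorem \ref{main} is quoted from [\textbf{\ref{BM}}, Lemma 3.1], my plan is to follow the argument there, which reduces $I_{\underline{\alpha},\underline{\beta}}(A)$ to the twisted fourth moment of $\zeta$ with shifts established in [\textbf{\ref{BBLR}}]. First I would expand
\[
A\big(\tfrac{1}{2}+\alpha_3+it\big)A\big(\tfrac{1}{2}+\beta_3-it\big)=\sum_{h,k\leq y_1}\frac{P\big(\tfrac{\log y_1/h}{\log y_1}\big)P\big(\tfrac{\log y_1/k}{\log y_1}\big)}{h^{1/2+\alpha_3+it}\,k^{1/2+\beta_3-it}},
\]
interchange summation and integration, and thereby reduce the problem to evaluating
\[
J(h,k):=\int_T^{2T}\Big(\prod_{j=1,2}\zeta\big(\tfrac{1}{2}+\alpha_j+it\big)\zeta\big(\tfrac{1}{2}+\beta_j-it\big)\Big)\Big(\tfrac{h}{k}\Big)^{-it}dt
\]
uniformly for pairs with $hk\leq y_1^2=T^{2\vartheta_1}$. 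The hypothesis $\vartheta_1<1/4$ is precisely what places this range inside the validity of the [\textbf{\ref{BBLR}}] asymptotic, with a power-saving error.

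Next I would substitute the [\textbf{\ref{BBLR}}] main term for $J(h,k)$, which by the CFKRS recipe is a sum of $\binom{4}{2}=6$ ``swap'' terms indexed by pairings of the shifts, each comprising a smooth function of the shifts times an arithmetic factor which factors as an Euler product over the primes dividing $hk$. Collecting these Euler products across all primes yields the global constant
\[
C=\prod_p\bigg(\Big(1-\tfrac{1}{p}\Big)^9\sum_{j\geq 0}\frac{d_3(p^j)^2}{p^j}\bigg),
\]
which is the natural combination of the nine $\zeta(1+\text{shift})$-poles present in the recipe.

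To package the resulting sum over $(h,k)$, together with the polynomial weights $P$ and the shift residues, into the clean integral $c(\underline{\alpha},\underline{\beta})$ displayed in \eqref{ca}, I would represent each occurrence of $P$ by its Mellin/Perron integral, producing the variables $x_1,\ldots,x_4\in[0,1]$ (the constraints $x+x_1+x_2\leq 1$ and $x+x_3+x_4\leq 1$ coming from absolute convergence of the induced Dirichlet series), with $x$ arising from a common contour coupling the $(\alpha_3,\beta_3)$-shifts and $t_1,\ldots,t_4$ parametrising the residue integrations over the shift poles inside each swap term. Summing over the six swap terms and symmetrising produces the stated 9-fold integral. The principal obstacle is bookkeeping: maintaining uniformity in $\alpha_j,\beta_j\ll(\log T)^{-1}$, combining the six swap terms so that the result is manifestly analytic at vanishing shifts (the apparent $1/(\alpha_i+\beta_j)$ poles cancel when summed), and retaining a power-saving error in the sum over $(h,k)$ — the latter being exactly what forces $\vartheta_1<1/4$.
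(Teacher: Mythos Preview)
The paper does not actually prove Theorem \ref{main}; it simply quotes it verbatim from [\textbf{\ref{BM}}, Lemma 3.1], as stated in the sentence immediately preceding the theorem. Your proposal correctly recognises this and gives a reasonable high-level sketch of how that lemma is obtained in [\textbf{\ref{BM}}] from the twisted fourth moment of [\textbf{\ref{BBLR}}]: expand the amplifier, insert the CFKRS-type main term with its six swap contributions, and unpack the arithmetic and analytic factors via Mellin inversion to reach the nine-fold integral. That is essentially the route taken in the cited reference, so your plan is aligned with the source the paper relies on.
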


Using Theorem \ref{main} we will prove the following proposition.

\begin{proposition}\label{main1}
We have
\begin{equation*}
\int_{T}^{2T}|f_k^{(j)}(t;\lambda,v,\eta,A)|^2dt=\frac{Cc_{k,j}(\lambda,v,\eta)}{2}T(\log y_1)^{5}(\log T)^{8+2j}+O(T(\log T)^{12+2j})
\end{equation*}
for $j=0,1$, where $c_{k,j}(\lambda,v,\eta)$ is given by
\begin{align*}
&\mathop{\int}_{\substack{[0,1]^9\\x+x_1+x_2\leq1\\x+x_3+x_4\leq 1}}e^{i\vartheta_1\lambda\pi\big(x_2-x_4-(x_3-x_4)t_3+(x_1-x_2)t_4\big)-i\lambda\pi\big((1-\vartheta_1(x_1+x_3))t_1-(1-\vartheta_1(x_2+x_4))t_2\big)(t_3-t_4)}\\
&\quad\times \big(1-\vartheta_1(x_1+x_3)\big)\big(1-\vartheta_1(x_2+x_4)\big)\\
&\quad\times\Big(\vartheta_1(x_1-x_2)+\big(1-\vartheta_1(x_1+x_3)\big)t_1-\big(1-\vartheta_1(x_2+x_4)\big)t_2\Big)\\
&\quad\times\Big(\vartheta_1(x_3-x_4)+\big(1-\vartheta_1(x_1+x_3)\big)t_1-\big(1-\vartheta_1(x_2+x_4)\big)t_2\Big)\nonumber\\
&\quad\times \Big(\frac12-\vartheta_1 x_3+\vartheta_1(x_3-x_4)t_3-\big(1-\vartheta_1(x_1+x_3)\big)t_1(1-t_3)-\big(1-\vartheta_1(x_2+x_4)\big)t_2t_3\Big)^k\\
&\quad\times \Big(\frac12-\vartheta_1 x_4-\vartheta_1(x_3-x_4)t_3-\big(1-\vartheta_1(x_1+x_3)\big)t_1t_3-\big(1-\vartheta_1(x_2+x_4)\big)t_2(1-t_3)\Big)^k\\
&\quad\times \Big(\frac12-\vartheta_1 x_1+\vartheta_1(x_1-x_2)t_4-\big(1-\vartheta_1(x_1+x_3)\big)t_1(1-t_4)-\big(1-\vartheta_1(x_2+x_4)\big)t_2t_4\Big)^k\\
&\quad\times \Big(\frac12-\vartheta_1 x_2-\vartheta_1(x_1-x_2)t_4-\big(1-\vartheta_1(x_1+x_3)\big)t_1t_4-\big(1-\vartheta_1(x_2+x_4)\big)t_2(1-t_4)\Big)^k\\
&\quad\times \Big(v+1-\vartheta_1(x+x_1+x_2+x_3+x_4)-\big(1-\vartheta_1(x_1+x_3)\big)t_1-\big(1-\vartheta_1(x_2+x_4)\big)t_2\Big)^{2j}\\
&\quad\times P(1-x-x_1-x_2)P(1-x-x_3-x_4)dx_1dx_2dx_3dx_4dxdt_1dt_2dt_3dt_4.
\end{align*}
\end{proposition}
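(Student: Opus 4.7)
The plan is to reduce the computation of $\int_T^{2T}|f_k^{(j)}|^2\,dt$ to Theorem~\ref{main} via a shift-parameter trick: auxiliary shifts encode the derivatives of $Z$ so that $|f_k^{(j)}|^2$ becomes a high-order derivative of a simpler object fitting the shape of $I_{\underline{\alpha},\underline{\beta}}(A)$. Introduce $a_1,\ldots,a_4$ with $\tilde a_i = a_i$ for $i=1,2$ and $\tilde a_i = \pi\lambda/\log T + a_i$ for $i=3,4$, so that
\[
Z^{(k)}(t)^2\,Z^{(k)}\!\big(t+\tfrac{\pi\lambda}{\log T}\big)^2 = \frac{\partial^{4k}}{\partial a_1^{k}\cdots\partial a_4^{k}}\bigg|_{\underline a=0}\prod_{i=1}^{4}Z(t+\tilde a_i).
\]
For $j=1$ use in addition
\[
f_k'(t)\,\overline{f_k'(t)} = \frac{\partial^2}{\partial b_1\,\partial b_2}\bigg|_{b_1=b_2=0}\!f_k(t+b_1)\overline{f_k(t+b_2)},
\]
which spreads two further $b$-shifts across every factor of $f_k$; in particular, the $b$-shifts also fall on the exponential $e^{2iv\theta(t)}$, and this is what produces the $v$-dependence of the main term.

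Next, use $Z(t+c) = e^{i\theta(t+c)}\zeta(\tfrac{1}{2}+i(t+c))$ together with $\chi(\tfrac{1}{2}+it) = e^{-2i\theta(t)}$ and the functional equation $\zeta(s) = \chi(s)\zeta(1-s)$ applied to the factors indexed by $i=3,4$ to write
\[
\prod_{i=1}^{4}Z(t+\tilde a_i) = e^{i\Psi(t;\underline{\tilde a})}\,\zeta(\tfrac{1}{2}+i(t+\tilde a_1))\,\zeta(\tfrac{1}{2}+i(t+\tilde a_2))\,\zeta(\tfrac{1}{2}-i(t+\tilde a_3))\,\zeta(\tfrac{1}{2}-i(t+\tilde a_4)),
\]
with $\Psi(t;\underline{\tilde a}) = \theta(t+\tilde a_1)+\theta(t+\tilde a_2)-\theta(t+\tilde a_3)-\theta(t+\tilde a_4)$. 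Setting $\alpha_1 = i\tilde a_1$, $\alpha_2=i\tilde a_2$, $\beta_1 = -i\tilde a_3$, $\beta_2=-i\tilde a_4$ and $\alpha_3 = 2\pi i\eta/\log T$, $\beta_3 = -2\pi i\eta/\log T$, and multiplying by $|A|^2$, the $t$-integral takes the shape of $I_{\underline\alpha,\underline\beta}(A)$ modulated by the slowly varying phase $e^{i\Psi(t;\underline{\tilde a})}$ (and, for $j=1$, the phase $e^{2iv(\theta(t+b_1)-\theta(t+b_2))}$). Using the Taylor expansion $\theta(t+c) = \theta(t)+c\theta'(t)+O(c^2/t)$ with $\theta'(t) = \tfrac{1}{2}\log(t/2\pi)+O(1/t)$, these phases merge with the $y_1^{-(\cdot)}T^{-(\cdot)}$ kernels of the integrand \eqref{ca} after a corresponding linear shift of the $\alpha,\beta$ parameters; the quadratic remainders are of size $O(1/t)$ and contribute only to the error term.

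Now apply Theorem~\ref{main} to obtain $I_{\underline\alpha,\underline\beta}(A) = (C/2)\,c(\underline\alpha,\underline\beta)\,T(\log y_1)^5(\log T)^4 + O(T(\log T)^8)$, uniformly in the shifts (which are $\ll(\log T)^{-1}$ by construction). Differentiate the explicit formula \eqref{ca} for $c(\underline\alpha,\underline\beta)$ a total of $4k$ times in the $a_i$ (and $2j$ times in $b_i$), and evaluate at $\underline a = \underline b = 0$. The dominant contribution comes from derivatives falling on the exponential kernels $y_1^{-(\cdots)}T^{-(\cdots)}$ combined with the phase $e^{i\Psi}$: a single derivative in $a_i$ brings down a combination of the form $-(\vartheta_1(\text{polynomial in } x_j)+(\text{polynomial in }t_j))\log T$ together with the $\tfrac{1}{2}\log(t/2\pi)$ from $\theta'(t)$, and $k$ such derivatives in the same variable produce one of the four factors $(\tfrac{1}{2}-\vartheta_1 x_i+\cdots)^k$ appearing in the integrand of $c_{k,j}$. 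The $2j$ derivatives in $b_i$ similarly produce the factor $(v+1-\cdots)^{2j}$, with the $v$-linearity coming from the $e^{2iv\theta}$ piece of $f_k$. Finally, the residual $y_1^{-i\delta(\cdots)}T^{-i\delta(\cdots)}$ factors (with $\delta=\pi\lambda/\log T$, $\log y_1=\vartheta_1\log T$) convert to the prefactor $e^{i\vartheta_1\lambda\pi(\cdots)-i\lambda\pi(\cdots)(t_3-t_4)}$, while the remaining bounded prefactors in \eqref{ca} are inherited directly.

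The main obstacle is the combinatorial accounting of the differentiation: one must verify that derivatives falling on the $P$-polynomial factors, on the prefactor $(1-\vartheta_1(x_1+x_3))(1-\vartheta_1(x_2+x_4))$, or on the two $(\vartheta_1(x_i-x_j)+\cdots)$ factors of \eqref{ca} contribute only $O(T(\log T)^{12+2j})$ to the error, and that the many ways of distributing the $4k+2j$ derivatives across the various factors recombine cleanly into the displayed main-term integrand. The uniformity of Theorem~\ref{main} in the shifts $\alpha_j,\beta_j\ll(\log T)^{-1}$ is essential here, since it is what justifies differentiating the asymptotic term by term.
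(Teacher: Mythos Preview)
Your approach is essentially the paper's: reduce to Theorem~\ref{main} by encoding the $Z$-derivatives as shift-derivatives, then differentiate $c(\underline{\alpha},\underline{\beta})$ and specialise the shifts. Two simplifications are worth noting. First, the paper writes $|f_k|^2=f_k\cdot\overline{f_k}$ and uses the reality of $Z^{(k)}$ (rather than the functional equation) to place two $\zeta$-factors at $\tfrac12+it$ and two at $\tfrac12-it$; the $e^{\pm i\theta}$ phases then cancel \emph{exactly}, so there is no residual $\Psi$ to merge into the kernel. Concretely, the paper packages the derivative as
\[
Z^{(k)}(t+a)=i^k e^{i\theta(t+a)}(\log T)^k\,Q\Big(\tfrac{1}{\log T}\tfrac{\partial}{\partial\alpha}\Big)\zeta(\tfrac12+\alpha+it)\Big|_{\alpha=ia}+\text{error},\qquad Q(x)=\big(\tfrac12+x\big)^k,
\]
and similarly introduces $R(x)=v+1+x$ for the outer $t$-derivative when $j=1$, arriving directly at a polynomial differential operator applied to $I_{\underline\alpha,\underline\beta}(A)$ with no phase. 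Second, your ``main obstacle'' is not one: in the integrand \eqref{ca} only the exponential $y_1^{-(\cdots)}T^{-(\cdots)}$ depends on the shifts, so none of the derivatives fall on the $P$-factors or on the algebraic prefactors, and the single identity
\[
Q\Big(\tfrac{1}{\log T}\tfrac{\partial}{\partial\alpha}\Big)X^{\alpha}=Q\Big(\tfrac{\log X}{\log T}\Big)X^{\alpha}
\]
(and its analogue for $R$) reads off the displayed integrand immediately, with no combinatorics to disentangle. The justification for differentiating term-by-term is, as you say, the uniformity in Theorem~\ref{main} together with Cauchy's integral formula.
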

\begin{proof}
Note that
\begin{equation}\label{thetaprop}
\theta'(t)=\frac{\log T}{2}+O(1)\qquad\text{and}\qquad \theta^{(k)}(t)\ll_k T^{-k+1}
\end{equation}
for $|t|\asymp T$ and any $k\geq 2$. So
\begin{align}\label{Z'tformula}
&Z^{(k)}(t+a)\nonumber\\
&\quad=i^ke^{i\theta(t+a)}\sum_{j=0}^{k}\binom{k}{j}\Big(\frac{\log T}{2}\Big)^j\zeta^{k-j}(\tfrac12+it+ia)+O\bigg(\sum_{j=0}^{k}(\log T)^{j-1}|\zeta^{k-j}(\tfrac12+it+ia)|\bigg)\nonumber\\
&\quad=i^ke^{i\theta(t+a)}(\log T)^kQ\Big(\frac{1}{\log T}\frac{\partial}{\partial\alpha}\Big)\zeta(\tfrac{1}{2}+\alpha+it)\Big|_{\alpha=ia}\\
&\qquad\qquad\qquad+O\bigg(\sum_{j=0}^{k}(\log T)^{j-1}|\zeta^{k-j}(\tfrac12+it+ia)|\bigg),\nonumber
\end{align}
where
\[
Q(x)=\Big(\frac12+x\Big)^k.
\]
By Theorem \ref{main}, the contribution of the $O$-term to $\int|f_k(t;\lambda,v,\eta,A)|^2$ is $O(T(\log T)^{4k+8})$.
Hence
\begin{align}\label{abc}
&\int_{T}^{2T}|f_k(t;\lambda,v,\eta,A)|^2dt\nonumber\\
&\qquad=(\log T)^{4k}Q\Big(\frac{1}{\log T}\frac{\partial}{\partial\alpha_1}\Big)Q\Big(\frac{1}{\log T}\frac{\partial}{\partial\alpha_2}\Big)Q\Big(\frac{1}{\log T}\frac{\partial}{\partial\beta_1}\Big)Q\Big(\frac{1}{\log T}\frac{\partial}{\partial\beta_2}\Big)\\
&\qquad\qquad\qquad I_{\underline{\alpha},\underline{\beta}}(A)\bigg|_{\substack{\alpha_1=\beta_1=0\\\alpha_2=\pi i\lambda/\log T,\beta_2=-\pi i\lambda/\log T\\\alpha_3=2\pi i\eta/\log T,\beta_3=-2\pi i\eta/\log T}}+O(T(\log T)^{4k+8}),\nonumber
\end{align}
where $I_{\underline{\alpha},\underline{\beta}}(A)$ is given in \eqref{Iab}.

Similarly, ignoring various error terms we have
\begin{align*}
&f_k(t;\lambda,v,\eta,A)=i^k(\log T)^{2k}Q\Big(\frac{1}{\log T}\frac{\partial}{\partial\alpha_1}\Big)Q\Big(\frac{1}{\log T}\frac{\partial}{\partial\alpha_2}\Big)\\
&\qquad\qquad e^{i(2v+1)\theta(t)+i\theta(t+\lambda\pi/\log T)}\zeta(\tfrac12+\alpha_1+it)\zeta(\tfrac12+\alpha_2+it)A(\tfrac12+\alpha_3+it)\bigg|_{\substack{\alpha_1=0\\ \alpha_2=\pi i\lambda/\log T\\\alpha_3=2\pi i\eta/\log T}}
\end{align*}
and
\begin{align*}
&f_k'(t;\lambda,v,\eta,A)\\
&\qquad=i^{k+1}(\log T)^{2k+1}Q\Big(\frac{1}{\log T}\frac{\partial}{\partial\alpha_1}\Big)Q\Big(\frac{1}{\log T}\frac{\partial}{\partial\alpha_2}\Big)R\bigg(\frac{1}{\log T}\Big(\frac{\partial}{\partial\alpha_1}+\frac{\partial}{\partial\alpha_2}+\frac{\partial}{\partial\alpha_3}\Big)\bigg)\\
&\qquad\qquad e^{i(2v+1)\theta(t)+i\theta(t+\lambda\pi/\log T)}\zeta(\tfrac12+\alpha_1+it)\zeta(\tfrac12+\alpha_2+it)A(\tfrac12+\alpha_3+it)\bigg|_{\substack{\alpha_1=\alpha_3=0\\ \alpha_2=\pi i\lambda/\log T\\\alpha_3=2\pi i\eta/\log T}},
\end{align*}
where
\[
R(x)=v+1+x.
\]
So
\begin{align}\label{abcd}
&\int_{-\infty}^{\infty}|f_k'(t;\lambda,v,\eta,A)|^2w\Big(\frac tT\Big)dt\nonumber\\
&\qquad=(\log T)^{4k+2}Q\Big(\frac{1}{\log T}\frac{\partial}{\partial\alpha_1}\Big)Q\Big(\frac{1}{\log T}\frac{\partial}{\partial\alpha_2}\Big)Q\Big(\frac{1}{\log T}\frac{\partial}{\partial\beta_1}\Big)Q\Big(\frac{1}{\log T}\frac{\partial}{\partial\beta_2}\Big)\nonumber\\
&\qquad\qquad R\bigg(\frac{1}{\log T}\Big(\frac{\partial}{\partial\alpha_1}+\frac{\partial}{\partial\alpha_2}+\frac{\partial}{\partial\alpha_3}\Big)\bigg)R\bigg(\frac{1}{\log T}\Big(\frac{\partial}{\partial\beta_1}+\frac{\partial}{\partial\beta_2}+\frac{\partial}{\partial\beta_3}\Big)\bigg)\nonumber\\
&\qquad\qquad\qquad I_{\underline{\alpha},\underline{\beta}}(A)\bigg|_{\substack{\alpha_1=\beta_1=0\\\alpha_2=\pi i\lambda/\log T,\beta_2=-\pi i\lambda/\log T\\\alpha_3=2\pi i\eta/\log T,\beta_3=-2\pi i\eta/\log T}}+O(T(\log T)^{4k+10}).
\end{align}

Since $I_{\underline{\alpha},\underline{\beta}}$ and $c(\underline{\alpha},\underline{\beta})$ are holomorphic with respect to $\alpha_j, \beta_j$ small, the derivatives appearing in \eqref{abc} and \eqref{abcd} can be obtained as integrals of radii $\asymp (\log T)^{-1}$ around the points $\alpha_1=\beta_1=0,\alpha_2=\pi i\lambda/\log T,\beta_2=-\pi i\lambda/\log T$ and $\alpha_3=2\pi i\eta/\log T,\beta_3=-2\pi i\eta/\log T$ using Cauchy's residue theorem. So we can get $c_{k,0}(\lambda,v,\eta)$ and $c_{k,1}(\lambda,v,\eta)$ by applying the differential operators in \eqref{abc} and \eqref{abcd} to $c(\underline{\alpha},\underline{\beta})$.  

Next we check that applying the above differential operators to $c(\underline{\alpha},\underline{\beta})$ does indeed give $c_{k,0}(\lambda,v,\eta)$ and $c_{k,1}(\lambda,v,\eta)$.
Note that
\begin{equation}\label{eq:Qop}
Q\Big(\frac{1}{\log T}\frac{\partial}{\partial\alpha}\Big)X^{\alpha}=Q\Big(\frac{\log X}{\log T}\Big)X^{\alpha}
\end{equation}
and
\begin{equation}
\label{eq:Rop}
R\bigg(\frac{1}{\log T}\Big(\frac{\partial}{\partial\alpha_1}+\frac{\partial}{\partial\alpha_2}+\frac{\partial}{\partial\alpha_3}\Big)\bigg)X_{1}^{\alpha_1}X_{2}^{\alpha_2}X_{3}^{\alpha_3}=R\Big(\frac{\log (X_1X_2X_3)}{\log T}\Big)X_{1}^{\alpha_1}X_{2}^{\alpha_2}X_{3}^{\alpha_3}.
\end{equation}
Using \eqref{eq:Qop}, \eqref{eq:Rop} and \eqref{ca}, we have
\begin{align*}
&Q\Big(\frac{1}{\log T}\frac{\partial}{\partial\alpha_1}\Big)Q\Big(\frac{1}{\log T}\frac{\partial}{\partial\alpha_2}\Big)Q\Big(\frac{1}{\log T}\frac{\partial}{\partial\beta_1}\Big)Q\Big(\frac{1}{\log T}\frac{\partial}{\partial\beta_2}\Big)\\
&\qquad R\bigg(\frac{1}{\log T}\Big(\frac{\partial}{\partial\alpha_1}+\frac{\partial}{\partial\alpha_2}+\frac{\partial}{\partial\alpha_3}\Big)\bigg)^jR\bigg(\frac{1}{\log T}\Big(\frac{\partial}{\partial\beta_1}+\frac{\partial}{\partial\beta_2}+\frac{\partial}{\partial\beta_3}\Big)\bigg)^jc(\underline{\alpha},\underline{\beta})\\
&=\mathop{\int}_{\substack{[0,1]^9\\x+x_1+x_2\leq1\\x+x_3+x_4\leq 1}}y_1^{-(\alpha_3+\beta_3)x-\alpha_3( x_1+ x_2)-\beta_3(x_3+x_4)-\beta_1 x_1-\beta_2 x_2-\alpha_1x_3-\alpha_2x_4-(\alpha_2-\alpha_1)(x_3-x_4)t_3-(\beta_2-\beta_1)(x_1-x_2)t_4}\nonumber\\
&\quad\times(Ty_1^{-x_1-x_3})^{-(\alpha_1+\beta_1)t_1-(\alpha_2-\alpha_1)t_1t_3-(\beta_2-\beta_1)t_1t_4}(Ty_1^{-x_2-x_4})^{-(\alpha_2+\beta_2)t_2+(\alpha_2-\alpha_1)t_2t_3+(\beta_2-\beta_1)t_2t_4}\nonumber\\
&\quad\times \big(1-\vartheta_1(x_1+x_3)\big)\big(1-\vartheta_1(x_2+x_4)\big)\nonumber\\
&\quad\times\Big(\vartheta_1(x_1-x_2)+\big(1-\vartheta_1(x_1+x_3)\big)t_1-\big(1-\vartheta_1(x_2+x_4)\big)t_2\Big)\nonumber\\
&\quad\times\Big(\vartheta_1(x_3-x_4)+\big(1-\vartheta_1(x_1+x_3)\big)t_1-\big(1-\vartheta_1(x_2+x_4)\big)t_2\Big) \nonumber\\
&\quad\times Q\Big(-\vartheta_1 x_3+\vartheta_1(x_3-x_4)t_3-\big(1-\vartheta_1(x_1+x_3)\big)t_1(1-t_3)-\big(1-\vartheta_1(x_2+x_4)\big)t_2t_3\Big)\\
&\quad\times Q\Big(-\vartheta_1 x_4-\vartheta_1(x_3-x_4)t_3-\big(1-\vartheta_1(x_1+x_3)\big)t_1t_3-\big(1-\vartheta_1(x_2+x_4)\big)t_2(1-t_3)\Big)\\
&\quad\times Q\Big(-\vartheta_1 x_1+\vartheta_1(x_1-x_2)t_4-\big(1-\vartheta_1(x_1+x_3)\big)t_1(1-t_4)-\big(1-\vartheta_1(x_2+x_4)\big)t_2t_4\Big)\\
&\quad\times Q\Big(-\vartheta_1 x_2-\vartheta_1(x_1-x_2)t_4-\big(1-\vartheta_1(x_1+x_3)\big)t_1t_4-\big(1-\vartheta_1(x_2+x_4)\big)t_2(1-t_4)\Big)\\
&\quad\times R\Big(-\vartheta_1(x+x_1+x_2+x_3+x_4)-\big(1-\vartheta_1(x_1+x_3)\big)t_1-\big(1-\vartheta_1(x_2+x_4)\big)t_2\Big)^{2j}\\
&\quad\times P(1-x-x_1-x_2)P(1-x-x_3-x_4)dx_1dx_2dx_3dx_4dxdt_1dt_2dt_3dt_4.
\end{align*}
Setting $\alpha_1=\beta_1=0,\alpha_2=\pi i\lambda/\log T,\beta_2=-\pi i\lambda/\log T$, $\alpha_3=2\pi i\eta/\log T,\beta_3=-2\pi i\eta/\log T$ and simplifying 
gives Proposition \ref{main1}.
\end{proof}

\subsection{Numerical evaluations}

It follows directly from Proposition \ref{main1} that
\[
h_k(\lambda,v,\eta,A)=\frac{c_{k,0}(\lambda,v,\eta)}{\lambda^2c_{k,1}(\lambda,v,\eta)}+o(1).
\]

With $\vartheta_1=0.2499$ and $P(x)=1-2.5x$, we have
\[
h_1(1.9,0.2,0.5,A)>1.02,
\]
and Theorem \ref{thm1} follows by \eqref{requiredlarge}.

Theorem \ref{thm2} is obtained with $\vartheta_1=0.2499$, $v=0.2$, $\eta=0.5$ and the following choices of polynomial $P$.
\begin{table}[h!]
	\centering
	\begin{tabular}{|c|c||c|c||c|c|}
		\hline
		$P(x)$ &  & $P(x)$ & &$P(x)$&   \\
		\hline\hline
		 $1-2x$& $h_2(1.606)>1.003$  & $1-1.5x$ &  $h_5(1.306)>1.001$ & $1-x$ & $h_8(1.205)>1.001$ \\[3pt]
		$1-1.8x$ & $h_3(1.451)>0.001$ & $1-1.4x$  & $h_6(1.265)>1.005$ &  $1-1.2x$      & $h_9(1.184)>1.01$ \\[3pt]
		 $1-1.7x$& $h_4(1.365)>1.01$ & $1-1.5x$ & $h_7(1.232)>1.008$  & $1-x$ & $h_{10}(1.167)>1.009$ \\[3pt]
		\hline
	\end{tabular}
	\caption{Values of $h_k(\lambda):=h_k(\lambda,0.2,0.5,A)$ for $2\leq k\leq 10$.}
\end{table}

\section{Proof of Theorem \ref{thmsmall}}

\subsection{The mean value result}

We recall from \eqref{Z'tformula} that
\begin{align*}
Z^{(2k)}\Big(t+\frac{a}{\log T}\Big)&=(-1)^ke^{i\theta(t+\frac{a}{\log T})}(\log T)^{2k}Q\Big(\frac{\partial}{\partial\alpha}\Big)\zeta\Big(\tfrac{1}{2}+it+\frac{\alpha}{\log T}\Big)\Big|_{\alpha=ia}\\
&\qquad\qquad+O\bigg(\sum_{j=0}^{2k}(\log T)^{j-1}\Big|\zeta^{(2k-j)}\Big(\tfrac12+it+\frac{ia}{\log T}\Big)\Big|\bigg),\nonumber
\end{align*}
where
\[
Q(x)=\Big(\frac12+x\Big)^{2k}.
\]
Thus,
\begin{align}\label{formula101}
&Z^{(2k)}\Big(\gamma+\frac{2\pi \mu}{\log T}\Big)Z^{(2k)}\Big(-\gamma+\frac{2\pi \mu}{\log T}\Big) \nonumber\\
&\ \ =e^{i(\theta(\gamma+\frac{2\pi \mu}{\log T})+\theta(-\gamma+\frac{2\pi \mu}{\log T}))}(\log T)^{4k}Q\Big(\frac{\partial}{\partial\alpha}\Big)Q\Big(\frac{\partial}{\partial\beta}\Big)\zeta\Big(\rho+\frac{\alpha}{\log T}\Big)\zeta\Big(1-\rho+\frac{\beta}{\log T}\Big)\Big|_{\alpha=\beta=2\pi i \mu}\nonumber\\
&\qquad\quad+O\bigg(\sum_{j_1,j_2\leq 2k}(\log T)^{j_1+j_2-1}\Big|\zeta^{(2k-j_1)}\Big(\rho+\frac{2\pi i \mu}{\log T}\Big)\zeta^{(2k-j_2)}\Big(1-\rho+\frac{2\pi i \mu}{\log T}\Big)\Big|\bigg)\nonumber\\
&\ \ =e^{2\pi i\mu}(\log T)^{4k}Q\Big(\frac{\partial}{\partial\alpha}\Big)Q\Big(\frac{\partial}{\partial\beta}\Big)\zeta\Big(\rho+\frac{\alpha}{\log T}\Big)\zeta\Big(1-\rho+\frac{\beta}{\log T}\Big)\Big|_{\alpha=\beta=2\pi i \mu}\nonumber\\
&\qquad\quad+O\bigg(\sum_{j_1,j_2\leq 2k}(\log T)^{j_1+j_2-1}\Big|\zeta^{(2k-j_1)}\Big(\rho+\frac{2\pi i \mu}{\log T}\Big)\zeta^{(2k-j_2)}\Big(1-\rho+\frac{2\pi i \mu}{\log T}\Big)\Big|\bigg),
\end{align}
by \eqref{thetaprop}.

We next quote a discrete mean value of the Riemann zeta-function. Let $P,Q$ be polynomials with $P(0)=0$ and let
\begin{align*}
	&J(a,b,P,Q)\\
	&\qquad=\sum_{T<\gamma \leq 2T}Q\Big(\frac{\partial}{\partial a}\Big)Q\Big(\frac{\partial}{\partial b}\Big) \zeta\Big(\rho+\frac{a}{\log T}\Big) \, \zeta\Big(1-\rho+\frac{b}{\log T}\Big)\,M(\rho,P) \, M(1-\rho,P).
\end{align*}
Then the following estimate holds.

\begin{theorem}\label{mainthm2}
	If $\vartheta_2<1/2$ and $a,b\ll 1$, then  we have
	\begin{align*}
		&J(a,b,P,Q)\sim\frac{T\log T}{2\pi}\frac{\partial^2}{\partial u \, \partial v}\Bigg\{\bigg(\frac{1}{\vartheta_2}\int_{0}^{1}P_u\, P_v\, dx+\int_{0}^{1}P_u\, dx\int_{0}^{1}P_v\, dx\bigg)\\
		&\qquad\qquad\qquad\qquad\qquad\qquad\qquad\qquad\times\bigg(\int_{0}^{1}T_uQ\, T_vQ\, dx-\int_{0}^{1}T_uQ\, dx\int_{0}^{1}T_vQ\, dx\bigg)\\
		&\qquad\qquad\qquad+\int_{0}^{1}P_u\, dx\int_{0}^{1}P_v\, dx\bigg(Q(0)-\int_{0}^{1}T_uQ\, dx\bigg)\bigg(Q(0)-\int_{0}^{1}T_vQ\, dx\bigg)\Bigg\}\Bigg|_{u=v=0}
	\end{align*}
	as $T\rightarrow\infty$, where $P_u=P(x+u)$, $P_v=P(x+v)$,
	\[
	T_uQ=e^{-a(x+\vartheta_2 u)}Q(-x-\vartheta_2 u) \qquad\text{and}\qquad T_vQ=e^{-b(x+\vartheta_2 v)}Q(-x-\vartheta_2 v).
	\]
\end{theorem}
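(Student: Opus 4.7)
The plan is to reduce the theorem to the shifted mollified discrete second moment
\[
\widetilde{J}(a,b,P):=\sum_{T<\gamma\leq 2T}\zeta\Big(\rho+\frac{a}{\log T}\Big)\zeta\Big(1-\rho+\frac{b}{\log T}\Big)M(\rho,P)M(1-\rho,P),
\]
which is the direct analogue of the mollified discrete second moment of $\zeta'(\rho)$ treated in [\textbf{\ref{CGG1}}] and [\textbf{\ref{BH-B}}], but now carrying two generic shifts through. Since the integrand is holomorphic in $(a,b)$ on a disc of radius $O(1)$, Cauchy's integral formula lets me realize the polynomial differential operators $Q(\partial/\partial a)Q(\partial/\partial b)$ as contour integrals in $a,b$; provided the asymptotic for $\widetilde{J}$ is uniform on such a disc, it commutes past these operators. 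The problem therefore reduces to proving an asymptotic for $\widetilde{J}(a,b,P)$ with explicit dependence on $a,b$, after which the combinatorial expression in the statement is recovered by substituting $\alpha=a/\log T,\beta=b/\log T$.

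To evaluate $\widetilde{J}(a,b,P)$, I would first represent it as a contour integral via the argument principle: with $c=1+1/\log T$ and the rectangle $\mathcal{R}$ of vertices $c+iT,\,c+2iT,\,1-c+2iT,\,1-c+iT$,
\[
\widetilde{J}(a,b,P)=\frac{1}{2\pi i}\oint_{\mathcal{R}}\frac{\zeta'}{\zeta}(s)\,\zeta\Big(s+\frac{a}{\log T}\Big)\zeta\Big(1-s+\frac{b}{\log T}\Big)M(s,P)M(1-s,P)\,ds,
\]
with the horizontal contributions negligible by standard convexity bounds. On the right edge every factor has an absolutely convergent Dirichlet series, which I would expand and collect into diagonal contributions of size $T\log T$; on the left edge I would apply the functional equation to both $\zeta$ factors, change variables $s\to 1-s$, and reduce to an integral on $\mathrm{Re}(s)=c$ carrying an oscillatory $\chi$-factor. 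Stationary phase on $[T,2T]$ then localizes this second integral onto a shifted convolution sum whose diagonal evaluation mirrors the standard Conrey--Ghosh--Gonek analysis.

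Collecting the two contributions and matching Mellin variables against the coefficients of $M(\rho,P)$ produces the two parenthetical structures in the claimed formula: the right-edge ``diagonal'' piece yields the last line of the asymptotic, while the ``swapped'' piece from the functional equation yields the first line, with the factor $1/\vartheta_2$ arising from the interaction of the mollifier length $y_2=T^{\vartheta_2}$ with the effective off-diagonal length $\asymp Ty_2^{-2}$. The Mellin duals of $P(x+u),P(x+v)$ and of the shifted $Q$-expressions $e^{-a(x+\vartheta_2 u)}Q(-x-\vartheta_2 u)$ manifest as the operators $T_uQ,T_vQ$ displayed in the statement; applying Cauchy's formula in $(u,v)$ (or, equivalently, the partial differentiations in $(a,b)$) and setting $u=v=0$ at the end recovers the quoted expression.

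The hard part will be the rigorous treatment of the off-diagonal/stationary-phase contribution, which requires a uniform shifted convolution estimate with test data depending on $(a,b)$. Here the restriction $\vartheta_2<1/2$ is critical: the modulus of the resulting binary additive divisor sum is $T^{2\vartheta_2}$, which is admissible only below $T$. This is precisely the step carried out in [\textbf{\ref{CGG1}}, \textbf{\ref{BH-B}}] in the unshifted case, and one must verify that introducing the shifts $a/\log T,\,b/\log T$ only twists the test functions by factors of size $1+O(1/\log T)$, so that the analysis there transfers with no new arithmetic input.
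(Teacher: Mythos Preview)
The paper does not give its own proof of this theorem: it quotes the result from the literature, attributing the statement to Conrey--Ghosh--Gonek [\textbf{\ref{CGG87}}; Theorem~2] and the proof with shifts and general $Q$ to Heap--Li--Zhao [\textbf{\ref{HLZ}}; Theorem~2], who build on the methods of [\textbf{\ref{CGG1}}] and [\textbf{\ref{BH-B}}]. Your sketch follows exactly the route those references take---contour representation via $\zeta'/\zeta$, right-edge Dirichlet expansion, left-edge functional equation followed by stationary phase, and off-diagonal control requiring $\vartheta_2<1/2$---so there is no in-paper argument to compare against, and your plan is the standard one.

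Two minor remarks. First, your claim that the shifts ``only twist the test functions by factors of size $1+O(1/\log T)$'' is correct for the \emph{error} analysis but not for the main terms, which depend nontrivially on $a,b$ (this is precisely where the factors $e^{-a(x+\vartheta_2 u)}$ in $T_uQ$ come from); keep the two roles of the shifts separate when you write this up. Second, the proofs in [\textbf{\ref{CGG1}}], [\textbf{\ref{BH-B}}], [\textbf{\ref{HLZ}}] are conditional on RH, which the paper assumes for the application in Theorem~\ref{thmsmall}; your contour-integral formulation likewise implicitly uses RH to place every $\rho$ on the critical line, so this hypothesis should be stated.
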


Conrey, Ghosh and Gonek stated this theorem in [\textbf{\ref{CGG87}}; Theorem 2] but did not provide a proof. A conditional proof assuming RH and the Generalised Lindel\"of Hypothesis was later given for the first derivative of the zeta-function without the shifts in [\textbf{\ref{CGG1}}]. The latter assumption was successfully removed by Bui and Heath-Brown [\textbf{\ref{BH-B}}]. Using the ideas in [\textbf{\ref{CGG1}}] and [\textbf{\ref{BH-B}}], Heap, Li and Zhao [\textbf{\ref{HLZ}}; Theorem 2] recently gave a proof with the shifts and even more general coefficients.

By Cauchy-Schwarz's inequality, Theorem \ref{mainthm2} and Cauchy's residue theorem, the contribution of the $O$-term in \eqref{formula101} to $\Sigma(\mu,M)$ is
\begin{align*}
&\ll \sum_{j_1,j_2\leq 2k}(\log T)^{j_1+j_2-1}\bigg(\sum_{T<\gamma \leq 2T}\Big|\zeta^{(2k-j_1)}\Big(\rho+\frac{2\pi i \mu}{\log T}\Big)M(\rho)\Big|^2\bigg)^{1/2}	\\
&\qquad\qquad\times \bigg(\sum_{T<\gamma \leq 2T}\Big|\zeta^{(2k-j_2)}\Big(1-\rho+\frac{2\pi i \mu}{\log T}\Big)M(\rho)\Big|^2\bigg)^{1/2}\\
&\ll T(\log T)^{4k}.
\end{align*}
Thus
\[
\Sigma_k(\mu,M)=e^{2\pi i\mu}(\log T)^{4k}J(2\pi i\mu,2\pi i\mu,P,Q)+O\big(T(\log T)^{4k}).
\]

We choose $\vartheta_1=0.499$, $P(x)=x^2$ and obtain
\[
\Sigma_k(\mu_k,M)=(-r_k+o(1))\frac{T(\log T)^{4k+1}}{2\pi}
\]
with
\[
\mu_1=0.2,\ r_1=0.0018;\qquad \mu_2=0.22,\ r_2=0.0002;\qquad \text{and}\qquad \mu_3=0.23,\ r_3=0.00002.
\] 
This proves Theorem \ref{thmsmall} by \eqref{requiredsmall}.

\section{Proof of Corollary \ref{coro}}

This is a modification of the idea of Bui, Goldston, Milinovich and Montgomery in Section 6 of [\textbf{\ref{BGMM}}].

Let $T$ be large and let $\mu$ and $P$ be chosen as in the proof of Theorem \ref{thmsmall}. Then 
\[
\sum_{T<\gamma \le 2T}Z^{(2k)}\Big(-\gamma+\frac{2\pi \mu}{\log T}\Big) Z^{(2k)}\Big(\gamma+\frac{2\pi \mu}{\log T}\Big) \Big|M\Big(\frac12+i\gamma\Big)\Big|^2 < 0.
\]
 Moreover, Theorem \ref{thmsmall} shows that this sum is $\gg T (\log T)^{4k+1}$ in magnitude. Note that every negative term in this sum corresponds to an ordinate $t_n\in[T,2T]$ with $Z^{(2k)}(t_n-\frac{2\pi\mu}{\log T})Z^{(2k)}(t_n+\frac{2\pi\mu}{\log T}) <0$. Therefore, 
\[
\begin{split}
T(\log T)^{4k+1} &\ll \bigg| \sum_{T<\gamma \le 2T} Z^{(2k)}\Big(-\gamma+\frac{2\pi \mu}{\log T}\Big) Z^{(2k)}\Big(\gamma+\frac{2\pi \mu}{\log T}\Big) \Big|M\Big(\frac12+i\gamma\Big)\Big|^2 \bigg| 
\\
&\le 
\bigg| \!\!\!\!\!\!\!\!\sum_{\substack{T<\gamma \le 2T \\ Z^{(2k)}(\gamma-\frac{2\pi\mu}{\log T})\\\ \ \ \ \ \times Z^{(2k)}(\gamma+\frac{2\pi\mu}{\log T}) <0  }}Z^{(2k)}\Big(-\gamma+\frac{2\pi \mu}{\log T}\Big) Z^{(2k)}\Big(\gamma+\frac{2\pi \mu}{\log T}\Big) \Big|M\Big(\frac12+i\gamma\Big)\Big|^2 \bigg|.
\end{split}
\]
An application of Cauchy-Schwarz's inequality to the latter sum then leads to
\begin{align}\label{finalestimate}
T^2(\log T)^{8k+2} &\le \bigg( \!\!\!\!\!\!\!\!\sum_{\substack{T<\gamma \le 2T \\ Z^{(2k)}(\gamma-\frac{2\pi\mu}{\log T})\\\ \ \ \ \ \times Z^{(2k)}(\gamma+\frac{2\pi\mu}{\log T}) <0  }} 1 \bigg) \\
&\qquad\times \bigg( \sum_{T<\gamma \le 2T} Z^{(2k)}\Big(-\gamma+\frac{2\pi \mu}{\log T}\Big)^2 Z^{(2k)}\Big(\gamma+\frac{2\pi \mu}{\log T}\Big)^2 \Big|M\Big(\frac12+i\gamma\Big)\Big|^4  \bigg),\nonumber
\end{align}
where we have extended the last sum to all $T<\gamma\le 2T$, by positivity.

 By \eqref{Z'tformula} and the upper bound for $|\zeta(\frac{1}{2}+\alpha+it)|$ in [\textbf{\ref{CS}}, \textbf{\ref{CC}}]  we have
\[
Z^{(2k)}(t+a) \ll \exp\!\bigg( \Big(\frac{\log 2}{2} + o(1)\Big) \frac{\log T}{\log\log T} \bigg)
\]
for $t\in[T,2T]$ and $a\ll (\log T)^{-1}$. From [\textbf{\ref{BGMM}}; p.15] we also have
\[
\begin{split}
\sum_{T<\gamma\le 2T} \bigg| M\Big( \frac{1}{2}+i\gamma\Big) \bigg|^4 &\ll T (\log T)^5.
\end{split}
\]
 Hence
\begin{align*}
&\sum_{T<\gamma \le 2T} Z^{(2k)}\Big(-\gamma+\frac{2\pi \mu}{\log T}\Big)^2 Z^{(2k)}\Big(\gamma+\frac{2\pi \mu}{\log T}\Big)^2 \Big|M\Big(\frac12+i\gamma\Big)\Big|^4 \\
&\qquad\qquad\ll T \, \exp\!\bigg( \big(\log 4 + o(1)\big) \frac{\log T}{\log\log T} \bigg).
\end{align*}
Combining with \eqref{finalestimate} we obtain that
\[
\!\!\!\!\!\!\!\!\sum_{\substack{T<\gamma \le 2T \\ Z^{(2k)}(\gamma-\frac{2\pi\mu}{\log T})\\\ \ \ \ \ \times Z^{(2k)}(\gamma+\frac{2\pi\mu}{\log T}) <0  }} 1 \ \gg \ T \, \exp\!\bigg( -\big(\log 4 + o(1)\big) \frac{\log T}{\log\log T} \bigg),
\]
and the corollary follows.

\noindent\textbf{Acknowledgements.}\ \ We would like to thank Micah Milinovich for many helpful discussions..

\end{document}